\documentclass[a4paper]{amsart}
\usepackage{graphicx,amsmath,amsfonts,latexsym,amssymb,amsthm,mathrsfs, color,hyperref}
\usepackage[latin1]{inputenc}
\evensidemargin0cm
\oddsidemargin0cm
\textheight22cm
\textwidth15cm

\newtheorem{theorem}{Theorem}
\newtheorem*{remark*}{Remark}

\begin{document}
\title[Heinz-Kato inequality in Banach spaces]%
{Heinz-Kato inequality in Banach spaces}
\author{Nikolaos Roidos}
\address{Department of Mathematics, University of Patras, 26504 Rio Patras, Greece}
\email{roidos@math.upatras.gr}

\subjclass[2010]{47A30; 47A63}
\thanks{The author was supported by Deutsche Forschungsgemeinschaft, grant SCHR 319/9-1.}
\date{\today}
\begin{abstract} 
It is observed that in Banach spaces, sectorial operators having bounded imaginary powers satisfy a Heinz-Kato inequality.
\end{abstract}
\maketitle

The Heinz-Kato inequality, see \cite{He} and \cite{Ka2}, is formulated as follows:

\begin{theorem}[Heinz-Kato]\label{HeKa} Let $H_{1}$, $H_{2}$ be Hilbert spaces and $A$, $B$ be positive selfadjoint operators in $H_{1}$, $H_{2}$ respectively. Let $T$ be a bounded linear operator from $H_{1}$ to $H_{2}$ such that $T(\mathcal{D}(A))\subseteq \mathcal{D}(B)$ and 
$$
\|BTu\|_{H_{2}}\leq M\|Au\|_{H_{1}},\quad u\in\mathcal{D}(A),
$$
for certain $M\geq0$. Then, for each $a\in(0,1)$ we have that $T(\mathcal{D}(A^{a}))\subseteq \mathcal{D}(B^{a})$ and
$$
\|B^{a}Tu\|_{H_{2}}\leq M^{a}\|T\|_{\mathcal{L}(H_{1},H_{2})}^{1-a}\|A^{a}u\|_{H_{1}},\quad u\in\mathcal{D}(A^{a}).
$$
\end{theorem}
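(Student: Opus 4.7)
The plan is to use complex interpolation via Hadamard's three-lines theorem, exploiting the Hilbert-space fact that the imaginary powers of positive selfadjoint operators are \emph{unitary}: $\|A^{it}\|_{\mathcal{L}(H_{1})}=\|B^{it}\|_{\mathcal{L}(H_{2})}=1$ for every $t\in\mathbb{R}$. The idea is to interpolate between the trivial bound $\|T\|_{\mathcal{L}(H_{1},H_{2})}$ on the line $\mathrm{Re}(z)=0$ and the bound $M$ on the line $\mathrm{Re}(z)=1$ supplied by the hypothesis. Concretely, fix $\epsilon>0$ and let $P_{\epsilon}^{A}$ and $P_{\epsilon}^{B}$ be the spectral projections of $A$ and $B$ onto the interval $[\epsilon,1/\epsilon]$. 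For $x\in P_{\epsilon}^{A}H_{1}$ and $y\in P_{\epsilon}^{B}H_{2}$ the vector-valued maps $z\mapsto A^{-z}x$ and $z\mapsto B^{\bar{z}}y$ are entire and uniformly bounded on every vertical strip (since $\lambda\mapsto\lambda^{\pm z}$ is bounded on $[\epsilon,1/\epsilon]$ for $z$ in such a strip), so
$$
F(z)=\langle B^{z}TA^{-z}x,\,y\rangle_{H_{2}}
$$
is holomorphic on the open strip $\{0<\mathrm{Re}(z)<1\}$ and continuous and bounded on its closure.

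On the line $\mathrm{Re}(z)=0$, unitarity immediately yields $|F(it)|\leq\|T\|_{\mathcal{L}(H_{1},H_{2})}\|x\|_{H_{1}}\|y\|_{H_{2}}$. On $\mathrm{Re}(z)=1$, I would factor $B^{1+it}TA^{-1-it}=B^{it}(BTA^{-1})A^{-it}$, observe that the hypothesis says exactly that $BTA^{-1}$ is bounded by $M$ on the range of $A$, and use unitarity of the flanking factors to obtain $|F(1+it)|\leq M\|x\|_{H_{1}}\|y\|_{H_{2}}$. The three-lines theorem then delivers
$$
|\langle B^{a}TA^{-a}x,\,y\rangle_{H_{2}}|=|F(a)|\leq M^{a}\|T\|_{\mathcal{L}(H_{1},H_{2})}^{1-a}\|x\|_{H_{1}}\|y\|_{H_{2}}
$$
for every $a\in(0,1)$. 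Taking the supremum over unit-norm $y\in P_{\epsilon}^{B}H_{2}$ and substituting $x=A^{a}u$ for $u\in P_{\epsilon}^{A}H_{1}$ produces the target inequality on the dense subspace $\bigcup_{\epsilon>0}P_{\epsilon}^{A}H_{1}$.

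Finally, the closedness of $B^{a}$ together with a standard approximation argument (replace $u$ by $P_{\epsilon}^{A}u$, use that the estimate makes $\{B^{a}TP_{\epsilon}^{A}u\}$ Cauchy as $\epsilon\to0$) extends the bound to every $u\in\mathcal{D}(A^{a})$ and simultaneously yields the inclusion $T(\mathcal{D}(A^{a}))\subseteq\mathcal{D}(B^{a})$. The main obstacle I anticipate is not conceptual but technical: one must give precise meaning to the holomorphic family $z\mapsto B^{z}TA^{-z}$ of (in general unbounded) operators, which is why the spectral truncation above is needed to apply the three-lines theorem in a clean vector-valued framework. The whole argument rests on the unitarity $\|A^{it}\|=\|B^{it}\|=1$, a Hilbert-space feature that is unavailable in general Banach spaces; its failure is precisely what forces the bounded imaginary powers hypothesis in the generalization treated in the rest of this paper.
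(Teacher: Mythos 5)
The paper never proves Theorem \ref{HeKa} itself --- it is quoted from \cite{He} and \cite{Ka2} --- so the relevant comparison is with the proof of the paper's third theorem, which establishes the Banach-space version for invertible sectorial operators by exactly the mechanism you propose: a Hadamard three-lines argument applied to $z\mapsto v(B^{-z}BTA^{-1}A^{z-1}A^{1+a}u)$, with the boundedness of the imaginary powers controlling the two boundary lines. Your proposal is that argument specialized to Hilbert space, where $A^{it}$, $B^{it}$ are unitary (so $M_A=M_B=1$, $\phi_A=\phi_B=0$ and the paper's Gaussian damping factor $e^{\xi z(z-1)}$ is unnecessary), and it is correct in outline; the interesting difference is the regularization device. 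The paper makes the analytic function well defined by pairing with a dual element $v\in X_2^*$ and smoothing with $A^{-1}A^{z-1}A^{1+a}u$ for $u\in\mathcal{D}(A^{1+a})$, then removing the regularization by closedness of $B^a$; you instead use the spectral projections $P^A_\epsilon$, $P^B_\epsilon$ onto $[\epsilon,1/\epsilon]$, which is the natural Hilbert-space substitute and makes $z\mapsto A^{-z}x$ and $z\mapsto B^{\bar z}y$ entire and bounded on strips, so the pairing $F(z)=\langle TA^{-z}x,B^{\bar z}y\rangle$ (the form in which your $F$ should be read, moving one factor of $B$ onto $y$ by selfadjointness on the line $\mathrm{Re}\,z=1$, where $T(\mathcal{D}(A))\subseteq\mathcal{D}(B)$ is used) is legitimate. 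This buys you a proof with no auxiliary weight and with the sharp constant $M^a\|T\|^{1-a}$, at the price of being tied to the spectral theorem, i.e.\ precisely the tool that is unavailable in the Banach-space setting of the paper.

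Two small points need to be tightened. First, taking the supremum over unit $y\in P^B_\delta H_2$ only yields $\|B^aP^B_\delta Tu\|\leq M^a\|T\|^{1-a}_{\mathcal{L}(H_1,H_2)}\|A^au\|_{H_1}$; to conclude $Tu\in\mathcal{D}(B^a)$ you should keep the two truncation parameters independent and invoke the standard spectral fact that $\sup_{\delta}\|B^aP^B_\delta w\|<\infty$ implies $w\in\mathcal{D}(B^a)$ with $\|B^aw\|=\lim_\delta\|B^aP^B_\delta w\|$ (monotone convergence for the spectral measure); ``closedness of $B^a$'' alone, as you phrase it, presupposes what is to be shown at this stage. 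Second, if ``positive selfadjoint'' is read as merely nonnegative, $A$ may have a kernel, and then $\bigcup_{\epsilon>0}P^A_\epsilon H_1$ is dense only in $(\ker A)^{\perp}$ and $P^A_\epsilon u\not\to u$; this is harmless but must be said: for $u_0\in\ker A\subseteq\mathcal{D}(A)$ the hypothesis gives $\|BTu_0\|\leq M\|Au_0\|=0$, so $Tu_0\in\ker B\subseteq\mathcal{D}(B^a)$ and $B^aTu_0=0$, and one then runs your limiting argument on the component of $u$ orthogonal to $\ker A$. With these two routine repairs the proof is complete.
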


The above theorem was extended in \cite{Ka1} to maximal accretive operators in Hilbert spaces. Moreover, in \cite[Theorem 2.31]{Ya} it was extended to invertible sectorial operators in a Hilbert space that have bounded imaginary powers.

Let $X$ be a complex Banach space. A closed densely defined linear operators $A$ in $X$ is called {\em invertible sectorial} if
$$
[0,+\infty)\subset\rho{(-A)} \quad \mbox{and} \quad (1+s)\|(A+s)^{-1}\|_{\mathcal{L}(X)}\leq K, \quad s\in[0,+\infty),
$$
for certain $K\geq1$. In this situation a perturbation argument, see, e.g., \cite[(III.4.6.2)-(III.4.6.3)]{Am}, implies that 
$$
\Omega_{K}=\Big\{\lambda\in\mathbb{C}\, | \, |\arg(\lambda)|\leq\arcsin\Big(\frac{1}{2K}\Big)\Big\}\cup\Big\{\lambda\in\mathbb{C}\, | \, |\lambda|\leq\frac{1}{2K}\Big\}\subset\rho{(-A)}
$$
and
\begin{gather*}
(1+|\lambda|)\|(A+\lambda)^{-1}\|_{\mathcal{L}(E)}\leq 2K+1, \quad \forall \lambda\in \Omega_{K}.
\end{gather*}

Denote by $\partial\Omega_{K}$ the positively oriented boundary of $\Omega_{K}$. The complex powers $A^{z}$ of $A$ for $\mathrm{Re}(z)<0$ are defined by the Dunford integral formula
$$
A^{z}=\frac{1}{2\pi i}\int_{\partial\Omega_{K}}(-\lambda)^{z}(A+\lambda)^{-1}d\lambda,
$$
see, e.g., \cite[Theorem III.4.6.5]{Am}; in particular, if $a\in(0,1)$ then 
$$
A^{-a}=\frac{\sin(\pi a)}{\pi}\int_{0}^{\infty}s^{-a}(A+s)^{-1}ds.
$$
The family $\{A^z\}_{\mathrm{Re}(z)<0}\cup\{A^{0}=I\}$ is a strongly continuous holomorphic semigroup on $X$, see, e.g., \cite[Theorem III.4.6.2 ]{Am}. Moreover, each $A^{z}$, $\mathrm{Re}(z)<0$, is an injection and the complex powers for positive real part $A^{-z}$ are defined by $A^{-z}=(A^{z})^{-1}$. The imaginary powers $A^{it}$, $t\in\mathbb{R}$, are defined by the closure of the operator 
$$
\mathcal{D}(A)\ni u\mapsto \frac{\sin(i\pi t)}{i\pi t}\int_{0}^{\infty}s^{it}(A+s)^{-2}Auds \quad \text{in} \quad X,
$$
see, e.g., \cite[(III.4.6.21)]{Am}; they can be either bounded or unbounded operators in $X$.

If there exist $\delta,C>0$ such that $A^{it}\in\mathcal{L}(X)$ and $\|A^{it}\|_{\mathcal{L}(X)}\leq C$ when $t\in[-\delta,\delta]$, then we say that {\em $A$ has bounded imaginary powers}. In this situation, $A^{it}\in\mathcal{L}(X)$ for all $t\in\mathbb{R}$, $\{A^{it}\}_{t\in\mathbb{R}}$ forms a strongly continuous group and there exist some $\phi_{A}\geq0$ and $M_{A}\geq1$ such that $\|A^{it}\|_{\mathcal{L}(X)}\leq M_{A}e^{\phi_{A}|t|}$, $t\in\mathbb{R}$, see, e.g., \cite[Theorem III.4.7.1]{Am} and \cite[Corollary III.4.7.2]{Am}. Moreover, the family $\{A^{z}\}_{\mathrm{Re}(z)\leq0}$ is a strongly continuous semigroup on $\mathcal{L}(X)$, see, e.g., \cite[Theorem III.4.7.1]{Am}. For further properties of the complex powers of invertible sectorial operators we refer to \cite[Section III.4.6]{Am}, \cite[Section III.4.7]{Am} and \cite[Section 2]{Tan}. 

A closed densely defined injective linear operator $B$ in $X$ is called {\em sectorial} if
$$
(0,+\infty)\subset\rho{(-B)} \quad \mbox{and} \quad s\|(B+s)^{-1}\|_{\mathcal{L}(X)}\leq L, \quad s\in(0,+\infty),
$$
for certain $L\geq1$. If $B$ is sectorial, then similarly to \cite[(III.4.6.2)-(III.4.6.3)]{Am} we have that 
$$
S_{L}=\Big\{\lambda\in\mathbb{C}\, | \, |\arg(\lambda)|\leq\arcsin\Big(\frac{1}{2L}\Big)\Big\}\subset\rho{(-B)}
$$
and
\begin{gather*}
|\lambda|\|(B+\lambda)^{-1}\|_{\mathcal{L}(E)}\leq 2L-1, \quad \forall \lambda\in S_{L}.
\end{gather*}
For each $m,k\in\mathbb{N}$ and $\eta\in\mathbb{C}$ such that $|\mathrm{Re}(\eta)|<m$ denote
$$
Q_{A}(\eta,m,k)=\frac{1}{2\pi i}\int_{\partial S_{L}}\Big(\frac{k}{k-\lambda}-\frac{1}{1-k\lambda}\Big)^{m}(-\lambda)^{\eta}(B+\lambda)^{-1}d\lambda,
$$
where $\partial S_{L}$ is the positively oriented boundary of $S_{L}$. The $\eta$-complex power of $B$ is defined pointwise as
$$
B^{\eta}u=\lim_{k\rightarrow\infty}Q_{A}(\eta,m,k)u, \quad u\in \mathcal{D}(B^{\eta}),
$$
where 
$$
\mathcal{D}(B^{\eta})=\{u\in X\, |\, \text{$\lim_{k\rightarrow\infty}Q_{A}(\eta,m,k)u$ exists in $X$}\}.
$$
$B^{\eta}$ is a well defined closed linear operator in $X$ independent from $m$; it is in general unbounded and satisfies $\mathcal{D}(B^{m})\cap\mathcal{R}(B^{m})\subseteq \mathcal{D}(B^{\eta})$, where $\mathcal{R}(\cdot)$ denotes the range. In particular, $\mathcal{D}(B^{0})=X$ and $B^{0}=I$. Moreover, $\mathcal{D}(B^{\gamma})=\mathcal{D}((B+\mu)^{\gamma})$ for each $\gamma,\mu>0$, and the norms $\|\cdot\|_{X}+\|B^{\gamma}\cdot\|_{X}$, $\|(B+\mu)^{\gamma}\cdot\|_{X}$ are equivalent on $\mathcal{D}(B^{\gamma})$, see \cite[Lemma 15.22]{KW}. For further details on the complex powers of sectorial operators through the notion of the extended holomorphic functional calculus we refer to \cite[Section 15]{KW}.

By using the boundedness of the imaginary powers property we obtain the following generalization of Theorem \ref{HeKa}.

\begin{theorem}\label{Theorem2}
Let $X_{1}$, $X_{2}$ be Banach spaces and $A$, $B$ be sectorial operators in $X_{1}$, $X_{2}$ respectively that have bounded imaginary powers; i.e. $A^{it}\in \mathcal{L}(X_{1})$ and $B^{it}\in \mathcal{L}(X_{2})$ for each $t\in \mathbb{R}$ with 
$$
\|A^{it}\|_{\mathcal{L}(X_{1})}\leq M_{A}e^{\phi_{A}|t|}, \quad \|B^{it}\|_{\mathcal{L}(X_{2})}\leq M_{B}e^{\phi_{B}|t|}, \quad t\in\mathbb{R},
$$ 
for certain $M_{A},M_{B}\geq1$ and $\phi_{A},\phi_{B}\geq0$. Let $T$ be a bounded linear operator from $X_{1}$ to $X_{2}$ such that $T(\mathcal{D}(A))\subseteq \mathcal{D}(B)$ and 
$$
\|BTu\|_{X_{2}}\leq M\|Au\|_{X_{1}},\quad u\in\mathcal{D}(A),
$$
for certain $M\geq0$. Then, for each $a\in(0,1)$ we have that $T(\mathcal{D}(A^{a}))\subseteq \mathcal{D}(B^{a})$ and 
$$
\|B^{a}Tu\|_{X_{2}}\leq M_{A}M_{B}M^{a}e^{\frac{\phi_{A}^{2}+\phi_{B}^{2}}{4}+2(\max\{a,1-a\})^{2}}\|T\|_{\mathcal{L}(X_{1},X_{2})}^{1-a}\|A^{a}u\|_{X_{1}}, \quad u\in \mathcal{D}(A^{a}).
$$
\end{theorem}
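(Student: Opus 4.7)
The proof is an application of the complex interpolation functor combined with Seeley's identification of fractional-power domains with complex interpolation spaces for BIP operators. First replace $A$, $B$ by the invertible sectorial operators $A+\mu$, $B+\mu$ (which retain BIP with constants converging to those of $A$, $B$ as $\mu\downarrow 0$), prove the estimate in the shifted setting, and then let $\mu\downarrow 0$ via the norm equivalence $\|(B+\mu)^{\gamma}\cdot\|\simeq\|\cdot\|+\|B^{\gamma}\cdot\|$ recorded in the excerpt.

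The key ingredient is an explicit Seeley-type inequality: for any invertible sectorial operator $C$ on a Banach space $Y$ with $\|C^{it}\|\le M_{C}e^{\phi_{C}|t|}$ and any $u\in\mathcal{D}(C^{a})$, the analytic extension
$$
g(z)=C^{a-z}\,e^{(z-a)^{2}}\,u,\qquad z\in\overline{\Sigma}:=\{0\le\mathrm{Re}(z)\le 1\},
$$
realises $u=g(a)$ and satisfies, by the BIP bound together with the elementary identity $\sup_{t\in\mathbb{R}}(-t^{2}+\phi_{C}|t|)=\phi_{C}^{2}/4$,
$$
\sup_{t}\|g(it)\|_{Y}\le M_{C}\,e^{\phi_{C}^{2}/4+a^{2}}\|C^{a}u\|,\qquad\sup_{t}\|Cg(1+it)\|_{Y}\le M_{C}\,e^{\phi_{C}^{2}/4+(1-a)^{2}}\|C^{a}u\|.
$$
Consequently $\|u\|_{[Y,\mathcal{D}(C)]_{a}}\le M_{C}\,e^{\phi_{C}^{2}/4+(\max\{a,1-a\})^{2}}\|C^{a}u\|$, and a dual three-lines argument applied to any analytic extension that realises the interpolation norm gives the reverse inequality with the same constant.

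Applying this identification to $C=A+\mu$ (in the forward direction) and $C=B+\mu$ (in the reverse direction), and combining with the standard complex-interpolation estimate
$$
\|T\|_{[X_{1},\mathcal{D}(A+\mu)]_{a}\to[X_{2},\mathcal{D}(B+\mu)]_{a}}\le \|T\|^{1-a}(M+O(\mu))^{a},
$$
which follows from $\|T\|_{X_{1}\to X_{2}}\le\|T\|$ and the bound $\|T\|_{\mathcal{D}(A+\mu)\to\mathcal{D}(B+\mu)}\le M+O(\mu)$ induced by the hypothesis $\|BTu\|\le M\|Au\|$, one obtains for $u\in\mathcal{D}((A+\mu)^{a})$
$$
\|(B+\mu)^{a}Tu\|\le M_{A}M_{B}\|T\|^{1-a}(M+O(\mu))^{a}\,e^{(\phi_{A}^{2}+\phi_{B}^{2})/4+2(\max\{a,1-a\})^{2}}\|(A+\mu)^{a}u\|.
$$
Letting $\mu\downarrow 0$ and invoking the norm equivalence closes the argument; the inclusion $T(\mathcal{D}(A^{a}))\subseteq\mathcal{D}(B^{a})$ is automatic from the boundedness established en route.

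\textbf{Main obstacle.} The principal technical burden is the sharp Seeley-type constant $M_{C}\,e^{\phi_{C}^{2}/4+(\max\{a,1-a\})^{2}}$, which is solely responsible for the clean additive separation $(\phi_{A}^{2}+\phi_{B}^{2})/4$ in the final bound---in contrast with the coupled $(\phi_{A}+\phi_{B})^{2}/4$ that a naive single three-lines applied to $z\mapsto(B+\mu)^{z}T(A+\mu)^{-z}$ would produce. The Gaussian factor $e^{(z-a)^{2}}$ in $g$ is the crucial device: it generates the $\phi_{C}^{2}/4$ exponent by balancing against the BIP growth $e^{\phi_{C}|t|}$, and the $(\max\{a,1-a\})^{2}$ exponent by its value on whichever vertical boundary line is farther from $a$. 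Verifying holomorphy of $g$ on $\overline{\Sigma}$, obtaining the two-sided Seeley inequality with the stated constant, securing the sharp factor $\|T\|^{1-a}M^{a}$ (rather than $\|T\|^{1-a}(\|T\|+M)^{a}$) from complex interpolation, and controlling constants through the $\mu\downarrow 0$ limit are the remaining technical checkpoints.
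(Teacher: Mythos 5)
Your overall strategy --- complex interpolation between the base space and the domain, combined with a Seeley-type identification of $\mathcal{D}(C^{a})$ with the complex interpolation space carrying the explicit BIP constant $M_{C}e^{\phi_{C}^{2}/4+(\max\{a,1-a\})^{2}}$ --- is exactly the strategy of the paper, which cites \cite[Theorem 15.28]{KW} for this identification. The decisive difference is that the paper never regularizes: it works with the \emph{homogeneous} completions $X_{A,b}$ of $(\mathcal{D}(A^{b}),\|A^{b}\cdot\|_{X_{1}})$, for which the hypothesis transfers verbatim to $\|\widetilde T\|_{\mathcal{L}(X_{A,1},X_{B,1})}\leq M$, and \cite[Theorem 15.28]{KW} applies directly to non-invertible sectorial operators with BIP on this homogeneous scale.

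Your detour through $A+\mu$, $B+\mu$ contains a genuine gap: the claim $\|T\|_{\mathcal{D}(A+\mu)\to\mathcal{D}(B+\mu)}\leq M+O(\mu)$ is false. From $\|(B+\mu)Tu\|\leq M\|Au\|+\mu\|T\|\|u\|$ and sectoriality one only gets $\mu\|u\|\leq L\|(A+\mu)u\|$, so the error term is $O(1)$, not $O(\mu)$. Concretely, take $X_{1}=X_{2}=L^{2}(0,1)$, $A$ multiplication by $x$, $B$ multiplication by $x/2$, $T=I$; then $M=1/2$ and $\|T\|_{\mathcal{L}(X_{1},X_{2})}=1$, but $\sup_{x\in(0,1)}(x/2+\mu)/(x+\mu)=1$ for every $\mu>0$, so $\|T\|_{\mathcal{D}(A+\mu)\to\mathcal{D}(B+\mu)}=1$ in the homogeneous (or graph) norms and does not approach $M$ as $\mu\downarrow0$. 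Interpolating with the correct bound would only yield $\|T\|_{\mathcal{L}(X_{1},X_{2})}^{1-a}\,C^{a}$ with $C$ of the order of $\max\{M,\|T\|_{\mathcal{L}(X_{1},X_{2})}\}$, which is strictly weaker than the asserted factor $\|T\|_{\mathcal{L}(X_{1},X_{2})}^{1-a}M^{a}$. The regularization moreover requires (i) $\mu$-uniform BIP bounds for $A+\mu$, $B+\mu$ with constants tending to $M_{A},\phi_{A}$, $M_{B},\phi_{B}$, and (ii) $\mu$-uniform control in the equivalence $\|(B+\mu)^{a}\cdot\|\simeq\|\cdot\|+\|B^{a}\cdot\|$ together with a closedness argument to conclude $Tu\in\mathcal{D}(B^{a})$ in the limit; all of these are asserted rather than proved. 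The repair is to drop the shift entirely and run the interpolation on the homogeneous scale, as the paper does.
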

\begin{proof}
Follows by \cite[Theorem 15.28]{KW} and interpolation. More precisely, for each $b\in[0,1]$ denote by $X_{A,b}$ and $X_{B,b}$ the completion of $(\mathcal{D}(A^{b}),\|A^{b}\cdot\|_{X_{1}})$ and $(\mathcal{D}(B^{b}),\|B^{b}\cdot\|_{X_{2}})$ respectively. Consider the map $\widetilde{T}:\{v_{k}\}_{k\in\mathbb{N}}\mapsto \{Tv_{k}\}_{k\in\mathbb{N}}$, where $\{v_{k}\}_{k\in\mathbb{N}}$ is a sequence in $X_{1}$. If $\{v_{k}\}_{k\in\mathbb{N}}$ is Cauchy in $X_{1}$ then $\{Tv_{k}\}_{k\in\mathbb{N}}$ is Cauchy in $X_{2}$ so that $\widetilde{T}$ can be extended to belong to $\mathcal{L}(X_{A,0},X_{B,0})$ with $\|\widetilde{T}\|_{\mathcal{L}(X_{A,0},X_{B,0})}=\|T\|_{\mathcal{L}(X_{1},X_{2})}$. Moreover, if $v_{k}\in \mathcal{D}(A)$, $k\in\mathbb{N}$, and $\{Av_{k}\}_{k\in\mathbb{N}}$ is Cauchy in $X_{1}$, then $Tv_{k}\in \mathcal{D}(B)$, $k\in\mathbb{N}$, and $\{BTv_{k}\}_{k\in\mathbb{N}}$ is Cauchy in $X_{2}$, so that $\widetilde{T}\in\mathcal{L}(X_{A,1},X_{B,1})$ and $\|\widetilde{T}\|_{\mathcal{L}(X_{A,1},X_{B,1})}\leq M$.

Due to \cite[Proposition 15.25]{KW} each of $(X_{A,0},X_{A,1})$, $(X_{B,0},X_{B,1})$ is an interpolation couple. If we denote by $[\cdot,\cdot]_{a}$ the complex interpolation functor of type $a$, then by \cite[Lemma 15.22]{KW} and \cite[Theorem 15.28]{KW} we have that $X_{A,a}=[X_{A,0},X_{A,1}]_{a}$ and 
$$
(M_{A}e^{\frac{\phi_{A}^{2}}{4}+(\max\{a,(1-a)\})^{2}})^{-1}\|\cdot\|_{[X_{A,0},X_{A,1}]_{a}}\leq \|\cdot\|_{X_{A,a}}\leq M_{A}e^{\frac{\phi_{A}^{2}}{4}+(\max\{a,1-a\})^{2}}\|\cdot\|_{[X_{A,0},X_{A,1}]_{a}},
$$
and similarly for $B$, where the bounds in the above inequality follow by the estimates in the proof of \cite[Theorem 15.28]{KW}. Moreover, by \cite[Theorem 2.6]{Lu}, i.e. since $[\cdot,\cdot]_{a}$ is exact of type $a$ (see, e.g., \cite[Theorem 1.9.3]{Tri}), we have that $\widetilde{T}\in\mathcal{L}([X_{A,0},X_{A,1}]_{a},[X_{B,0},X_{B,1}]_{a})$ and 
$$
\|\widetilde{T}\|_{\mathcal{L}([X_{A,0},X_{A,1}]_{a},[X_{B,0},X_{B,1}]_{a})}\leq \|\widetilde{T}\|_{\mathcal{L}(X_{A,0},X_{B,0})}^{1-a}\|\widetilde{T}\|_{\mathcal{L}(X_{A,1},X_{B,1})}^{a}.
$$
Hence, $\widetilde{T}\in\mathcal{L}(X_{A,a},X_{B,a})$ and 
$$
\|\widetilde{T}v\|_{X_{B,a}}\leq M_{A}M_{B}M^{a}e^{\frac{\phi_{A}^{2}+\phi_{B}^{2}}{4}+2(\max\{a,1-a\})^{2}}\|T\|_{\mathcal{L}(X_{1},X_{2})}^{1-a}\|v\|_{X_{A,a}}, \quad v\in X_{A,a}.
$$

If we let $v=\{v_{k}\}_{k\in\mathbb{N}}$ with $v_{k}=u\in \mathcal{D}(A^{a})$, $k\in\mathbb{N}$, then $v\in X_{A,a} $ and $\|u\|_{X_{A,a}}=\|A^{a}u\|_{X_{1}}$. Moreover, $\widetilde{T}u=\{Tv_{k}\}_{k\in\mathbb{N}}$ with $Tv_{k}=Tu\in X_{2}$, $k\in\mathbb{N}$, so that $\widetilde{T}u\in X_{B,0}\cap X_{B,a}$ and hence $Tu\in \mathcal{D}(B^{a})$ due to \cite[Proposition 15.26 (d)]{KW}, which implies that $\|\widetilde{T}u\|_{X_{B,a}}=\|B^{a}Tu\|_{X_{2}}$.
\end{proof}

If the sectorial operators are invertible, then by following the ideas in the proofs of \cite[Theorem 2.3.3]{Tan} and \cite[Theorem 2.31]{Ya} we give an alternative proof of Theorem \ref{Theorem2} as follows.

\begin{theorem} Let $X_{1}$, $X_{2}$ be Banach spaces and $A$, $B$ be invertible sectorial operators in $X_{1}$, $X_{2}$ respectively that have bounded imaginary powers; i.e. they satisfy 
$$
\|A^{it}\|_{\mathcal{L}(X_{1})}\leq M_{A}e^{\phi_{A}|t|} \quad \text{and} \quad \|B^{it}\|_{\mathcal{L}(X_{2})}\leq M_{B}e^{\phi_{B}|t|}, \quad t\in\mathbb{R},
$$ 
for certain $M_{A},M_{B}\geq1$ and $\phi_{A},\phi_{B}\geq0$. Let $T$ be a bounded linear operator from $X_{1}$ to $X_{2}$ such that $T(\mathcal{D}(A))\subseteq \mathcal{D}(B)$ and 
$$
\|BTu\|_{X_{2}}\leq M\|Au\|_{X_{1}},\quad u\in\mathcal{D}(A),
$$
for certain $M\geq0$. Then, for each $a\in(0,1)$ we have that $T(\mathcal{D}(A^{a}))\subseteq \mathcal{D}(B^{a})$ and
$$
\|B^{a}Tu\|_{X_{2}}\leq M_{A}M_{B} M^{a}e^{(\phi_{A}+\phi_{B})\sqrt{a(1-a)}}\|T\|_{\mathcal{L}(X_{1},X_{2})}^{1-a}\|A^{a}u\|_{X_{1}},\quad u\in\mathcal{D}(A^{a}).
$$
\end{theorem}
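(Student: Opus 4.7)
The plan is a direct Stein-style complex interpolation via the Hadamard three-lines lemma, exactly in the spirit of \cite[Theorem~2.3.3]{Tan} and \cite[Theorem~2.31]{Ya}. First I would note that the hypothesis $\|BTu\|_{X_{2}}\leq M\|Au\|_{X_{1}}$ forces $BTA^{-1}$ to extend to an operator in $\mathcal{L}(X_{1},X_{2})$ with $\|BTA^{-1}\|\leq M$, and would then fix $u$ in the regular dense subset $\mathcal{D}(A^{2})$ of $\mathcal{D}(A^{a})$ (in the graph norm) and introduce the $X_{2}$-valued function
$$
F(z)=B^{z}TA^{-z}(A^{a}u),\qquad z\in S:=\{z\in\mathbb{C}\,|\,0\leq\mathrm{Re}(z)\leq 1\}.
$$
Using the multiplicativity of the complex powers of the invertible sectorial operators $A$, $B$ together with the strong continuity and holomorphy of the semigroups $\{A^{-z}\}_{\mathrm{Re}(z)\geq 0}$ and $\{B^{-z}\}_{\mathrm{Re}(z)\geq 0}$, the map $F$ takes values in $X_{2}$, is continuous on $S$, holomorphic on the interior, and satisfies $F(a)=B^{a}Tu$ with $Tu\in\mathcal{D}(B)\subseteq\mathcal{D}(B^{a})$.

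Next, on the two boundary lines of $S$ one obtains the factorisations $F(iy)=B^{iy}TA^{-iy}(A^{a}u)$ and $F(1+iy)=B^{iy}(BTA^{-1})A^{-iy}(A^{a}u)$, which combined with the bounded imaginary powers estimates yield
$$
\|F(iy)\|_{X_{2}}\leq M_{A}M_{B}\|T\|_{\mathcal{L}(X_{1},X_{2})}\,e^{(\phi_{A}+\phi_{B})|y|}\|A^{a}u\|_{X_{1}},
$$
$$
\|F(1+iy)\|_{X_{2}}\leq M_{A}M_{B}\,M\,e^{(\phi_{A}+\phi_{B})|y|}\|A^{a}u\|_{X_{1}}.
$$
To absorb the exponential growth in $y$ one multiplies $F$ by the entire auxiliary factor $e^{-\lambda z(1-z)}$, $\lambda>0$; since $|e^{-\lambda z(1-z)}|\leq e^{-\lambda\,\mathrm{Im}(z)^{2}}$ on $S$, the scalar function $z\mapsto e^{-\lambda z(1-z)}\langle F(z),w^{*}\rangle$ is bounded, continuous on $S$ and holomorphic in the interior, for every $w^{*}\in X_{2}^{*}$. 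The Hadamard three-lines theorem at $z=a$, the identity $\sup_{y\in\mathbb{R}}e^{-\lambda y^{2}+(\phi_{A}+\phi_{B})|y|}=e^{(\phi_{A}+\phi_{B})^{2}/(4\lambda)}$, and a supremum over $\|w^{*}\|\leq 1$ then give
$$
\|B^{a}Tu\|_{X_{2}}\leq e^{\lambda a(1-a)+(\phi_{A}+\phi_{B})^{2}/(4\lambda)}M_{A}M_{B}M^{a}\|T\|^{1-a}_{\mathcal{L}(X_{1},X_{2})}\|A^{a}u\|_{X_{1}}.
$$
Minimising the bracket in $\lambda>0$ is achieved at $\lambda=(\phi_{A}+\phi_{B})/(2\sqrt{a(1-a)})$ and yields precisely the announced constant $e^{(\phi_{A}+\phi_{B})\sqrt{a(1-a)}}$.

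Finally, to extend the estimate from $\mathcal{D}(A^{2})$ to the full domain $\mathcal{D}(A^{a})$, I would use the standard resolvent approximation $u_{n}=n^{2}(A+n)^{-2}u\in\mathcal{D}(A^{2})$: by the uniform boundedness of $\{n(A+n)^{-1}\}_{n}$ in $\mathcal{L}(X_{1})$ and its strong convergence to $I$, one has $u_{n}\to u$ and $A^{a}u_{n}\to A^{a}u$ in $X_{1}$, so that $Tu_{n}\to Tu$ in $X_{2}$ and $\{B^{a}Tu_{n}\}$ is Cauchy by the already established estimate; the closedness of $B^{a}$ then produces $Tu\in\mathcal{D}(B^{a})$ with the stated bound. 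The principal technical obstacle is the verification of the holomorphy and continuity of $F$ up to the boundary of $S$, together with the operator identity $B^{1+iy}TA^{-1-iy}=B^{iy}(BTA^{-1})A^{-iy}$; both rest on the multiplicativity and domain properties of complex powers of invertible sectorial operators with bounded imaginary powers, which is precisely why restricting to the regular class $\mathcal{D}(A^{2})$ and then invoking the resolvent approximation are indispensable.
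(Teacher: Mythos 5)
Your proposal is correct and is essentially the paper's own argument: a Hadamard three-lines (Stein) interpolation with the Gaussian damping factor $e^{-\lambda z(1-z)}$ (the paper simply fixes the optimal value $\lambda=\xi=\tfrac{\phi_A+\phi_B}{2\sqrt{a(1-a)}}$ from the start), the same boundary factorizations through $B^{it}$, $A^{it}$ and $BTA^{-1}$, and a closedness/density step at the end. The only notable difference is that the paper sidesteps what you call the principal technical obstacle by defining the scalar function directly as $z\mapsto e^{\xi z(z-1)}v\bigl(B^{-z}(BTA^{-1})A^{z-1}A^{1+a}u\bigr)$ for $u\in\mathcal{D}(A^{1+a})$, i.e.\ entirely through bounded operators applied to a fixed vector, so that analyticity, continuity and boundedness on the strip follow at once from Amann's results --- which is exactly the factorization you invoke on the line $\mathrm{Re}(z)=1$, used globally.
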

\begin{proof}
Denote by $X_{2}^{\ast}$ the dual space of $X_{2}$ and let $v\in X_{2}^{\ast}$, $u\in\mathcal{D}(A^{1+a})$. By \cite[Theorem III.4.6.5]{Am}, \cite[Theorem III.4.7.1]{Am} and \cite[(III.4.7.3)]{Am} the map 
$$
z\mapsto f(z)= e^{\xi z(z-1)}v(B^{-z}BTA^{-1}A^{z-1}A^{1+a}u)
$$ 
is analytic in $\{z\in\mathbb{C}\, |\, 0<\mathrm{Re}(z)<1\}$ and continuous on $\{z\in\mathbb{C}\, |\, 0\leq\mathrm{Re}(z)\leq 1\}$, where $\xi=\frac{\phi}{2\sqrt{a(1-a)}}$, $\phi=\phi_{A}+\phi_{B}$. Moreover, for each $b\in [0,1]$ and $t\in\mathbb{R}$ we have that
\begin{eqnarray*}
\lefteqn{|f(b+it)|=e^{\xi(b(b-1)-t^{2})}|v(B^{-b}B^{-it}BTA^{-1}A^{b-1}A^{it}A^{1+a}u)|}\\
&\leq& M_{A}M_{B}e^{\xi b(b-1)}e^{\phi|t|-\xi t^{2}}\|v\|_{X_{2}^{\ast}}\|B^{-b}\|_{\mathcal{L}(X_{2})}\|BTA^{-1}\|_{\mathcal{L}(X_{1},X_{2})}\|A^{b-1}\|_{\mathcal{L}(X_{1})}\|A^{1+a}u\|_{X_{1}},
\end{eqnarray*}
so that $f$ is bounded on $\{z\in\mathbb{C}\, |\, 0\leq\mathrm{Re}(z)\leq 1\}$ due to \cite[Theorem III.4.6.4]{Am}.

We estimate
\begin{eqnarray*}
\lefteqn{|f(it)|=e^{-\xi t^{2}}|v(B^{-it}BTA^{-1}A^{it}A^{a}u)|}\\
&\leq&M_{A}M_{B}Me^{\phi|t|-\xi t^{2}}\|v\|_{X_{2}^{\ast}}\|A^{a}u\|_{X_{1}}\\
&\leq&e^{\frac{\phi\sqrt{a(1-a)}}{2}}M_{A}M_{B}M\|v\|_{X_{2}^{\ast}}\|A^{a}u\|_{X_{1}}.
\end{eqnarray*}
Similarly,
\begin{eqnarray*}
\lefteqn{|f(1+it)|=e^{-\xi t^{2}}|v(B^{-it}TA^{it}A^{a}u)|}\\
&\leq&M_{A}M_{B}e^{\phi|t|-\xi t^{2}}\|T\|_{\mathcal{L}(X_{1},X_{2})}\|v\|_{X_{2}^{\ast}}\|A^{a}u\|_{X_{1}}\\
&\leq&e^{\frac{\phi\sqrt{a(1-a)}}{2}}M_{A}M_{B}\|T\|_{\mathcal{L}(X_{1},X_{2})}\|v\|_{X_{2}^{\ast}}\|A^{a}u\|_{X_{1}}.
\end{eqnarray*}
Therefore, by the Hadamard's three lines theorem (see, e.g., \cite[Lemma 1.1.2]{BeLo}) we obtain
\begin{eqnarray*}
\lefteqn{|f(1-a)|=e^{-\frac{\phi\sqrt{a(1-a)}}{2}}|v(B^{a}TA^{-a}A^{a}u)|}\\
&\leq&\Big(\sup_{t\in\mathbb{R}}|f(it)|\Big)^{a}\Big(\sup_{t\in\mathbb{R}}|f(1+it)|\Big)^{1-a}\\
&\leq&e^{\frac{\phi\sqrt{a(1-a)}}{2}}M_{A}M_{B}M^{a}\|T\|_{\mathcal{L}(X_{1},X_{2})}^{1-a}\|v\|_{X_{2}^{\ast}}\|A^{a}u\|_{X_{1}}.
\end{eqnarray*}
We conclude that 
\begin{eqnarray*}
\|B^{a}TA^{-a}w\|_{X_{2}}\leq e^{\phi\sqrt{a(1-a)}}M_{A}M_{B}M^{a}\|T\|_{\mathcal{L}(X_{1},X_{2})}^{1-a} \|w\|_{X_{1}}, \quad w\in \mathcal{D}(A),
\end{eqnarray*}
so that by the closedness of $B^{a}$, $T$ maps $\mathcal{D}(A^{a})$ to $\mathcal{D}(B^{a})$ and the result follows.
\end{proof}

\begin{remark*}
Concerning the boundedness of the imaginary powers, for examples of operators satisfying this property we refer, e.g., to \cite{AHS} for elliptic differential operators with smooth coefficients, to \cite{DS} for elliptic differential operators with nonsmooth coefficients and to \cite{SS} for degenerate differential operators. 
\end{remark*}


\begin{thebibliography}{99}

\bibitem{Am} H. Amann. {\em Linear and quasilinear parabolic problems Vol. I. Abstract linear theory}. Monographs in Mathematics {\bf 89}, Birkh\"auser Verlag (1995).

\bibitem{AHS} H. Amann, M. Hieber, G. Simonett. {\em Bounded $H_{\infty}$-calculus for elliptic operators}. Differential Integral Equations {\bf 7}, no. 3-4, 613--653 (1994).

\bibitem{BeLo} J. Bergh, J. L\"ofstr\"om. {\em Interpolation spaces. An introduction}. Grundlehren der mathematischen Wissenschaften {\bf 223}, Springer Verlag (1976).

\bibitem{DS} X. T. Duong, G. Simonett. {\em $H_{\infty}$-calculus for elliptic operators with non-smooth coefficients}. Differential and Integral Equations {\bf 10}, no. 2, 201--217 (1997). 

\bibitem{He} E. Heinz. {\em Beitr\"age zur St\"orungstheorie der Spektralzerlegung}. Math. Ann. {\bf 123}, 415--438 (1951).

\bibitem{Ka1} T. Kato. {\em A generalization of the Heinz inequality}. Proc. Japan Acad. {\bf 37}, 305--308 (1961).

\bibitem{Ka2} T. Kato. {\em Notes on some inequalities for linear operators}. Math. Ann. {\bf 125}, no. 1, 208--212 (1952).

\bibitem{KW} P. C. Kunstmann, L. Weis. {\em Maximal $L_{p}$-regularity for parabolic equations, Fourier multiplier theorems and $H^{\infty}$-functional calculus}. Functional Analytic Methods for Evolution Equations, Lecture Notes in Mathematics {\bf 1855}, Springer Verlag, 65--311(2004).

\bibitem{Lu} A. Lunardi. {\em Interpolation theory}. Lecture Notes Scuola Normale Superiore {\bf 16}, Edizioni della Normale (2018).

\bibitem{SS} E. Schrohe, J. Seiler. {\em Bounded $H_{\infty}$-calculus for cone differential operators}. J. Evol. Equ. {\bf 18}, no 3, 1395--1425 (2018).

\bibitem{Tan} H. Tanabe. {\em Equations of evolution}. Monographs and Studies in Mathematics {\bf 6}, Pitman Publishing (1979).

\bibitem{Tri} H. Triebel. {\em Interpolation theory, function spaces, differential operators}. North-Holland Mathematical Library {\bf 18}, North-Holland (1978).

\bibitem{Ya} A. Yagi. {\em Abstract parabolic evolution equations and their applications}. Springer Monographs in Mathematics, Springer Verlag (2010).

\end{thebibliography}
\end{document}